\documentclass[preprint,12pt,authoryear]{elsarticle}

\usepackage{amssymb}
\usepackage{amsmath}
\usepackage{amsthm}

\usepackage[utf8]{inputenc}
\usepackage{geometry}
\usepackage{booktabs}  
\usepackage{graphicx}
\usepackage{hyperref}
\usepackage{color}
\usepackage{lineno}    
\usepackage{soul}
\newtheorem{theorem}{Theorem}

\journal{Journal of Fluids and Structures}

\begin{document}

\begin{frontmatter}



\title{Tropical Geometry as a Restricted Architecture for Physics-Informed Neural Networks: Applications in Nonlinear Fluid-Structure Examples}

\author[inst1]{Carla Valencia-Negrete\corref{cor1}}
\ead{carla.valencia@ibero.mx} 
\author[inst2]{Cristhian Garay-Lopez}
\author[inst1]{Marco Favela-Rodriguez}
\author[inst1]{Alonso Andapia-Viveros}

\cortext[cor1]{Corresponding author.}

\affiliation[inst1]{organization={Department of Physics and Mathematics, Universidad Iberoamericana},
             addressline={Prolongacion Paseo de la Reforma 880},
             city={Mexico City},
             postcode={01219},
             country={Mexico}}
 \affiliation[inst2]{organization={Centro de Investigacion en Matematicas, A.C.},
            addressline={Jalisco S/N, Col. Valenciana},
             city={Guanajuato},
             postcode={36023},
             country={Mexico}}

\begin{abstract}
Nonlinear algebraic (polynomial) differential equations that govern fluid-structure interactions, such as those modeling vortex-induced vibrations, and shock waves, often lack analytical solutions, creating significant challenges to efficient prediction and control. While Physics-Informed Neural Networks (PINNs) offer a mesh-free numerical alternative, they frequently suffer from convergence stagnation when optimizing over chaotic landscapes or stiff singularities. This paper introduces a hybrid methodology that integrates tropical differential algebraic geometry with deep learning. Using tropical algebra, we algorithmically determine a hard constraint, which we use to restrict the neural network's hypothesis space to the exact support of the valid formal power series solution. We establish a theoretical Valuation-Support equivalence between classical Briot-Bouquet indicial analysis and the fundamental theorem of tropical differential algebraic geometry, proving that tropical methods accurately identify singularity structures. Numerical experiments on the Van der Pol and Burgers' equations demonstrate that embedding these tropical constraints directly into the network architecture drastically reduces the search space, overcoming optimization stagnation and improving both accuracy and convergence speed in non-homogeneous physical regimes.
\end{abstract}

\begin{graphicalabstract}
\includegraphics[scale=0.45]{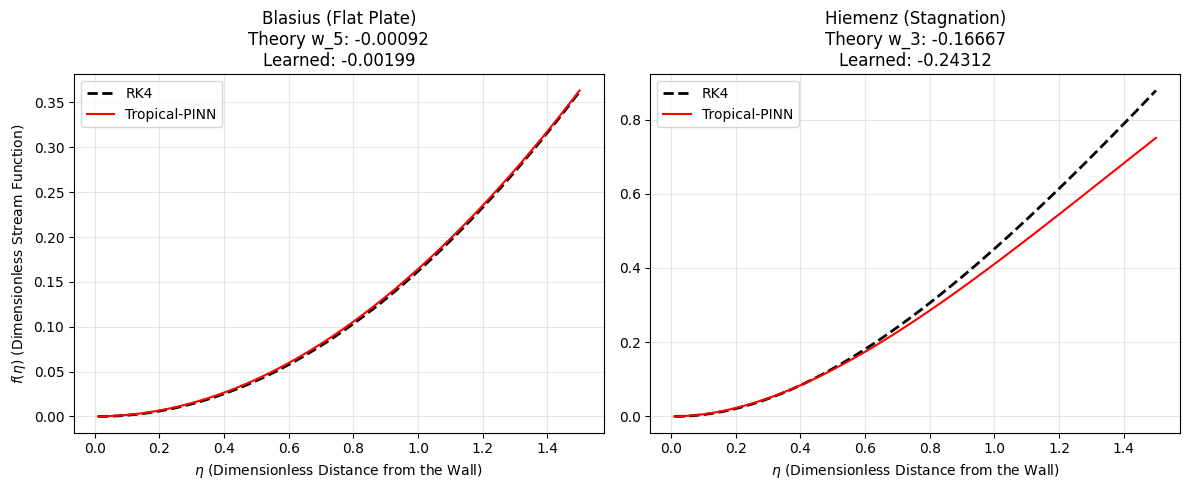}
\end{graphicalabstract}

\begin{highlights}
\item Proves the exact algebraic equivalence between classical Briot-Bouquet indicial analysis and tropical differential algebraic geometry.
\item Hard-coding tropical skeletons in the network architecture bypasses chaotic optimization landscapes and stiff singularities.
\item The hybrid Tropical-PINN model extracts highly precise, physical Taylor series coefficients directly from the physics loss.
\item Successfully identifies structural phase shifts in fluid-structure models, including the Blasius, Hiemenz, and Viscous Burgers' equations.
\end{highlights}

\begin{keyword}
Physics-Informed Neural Networks (PINNs) \sep Tropical Differential Algebraic Geometry \sep Boundary Layer Bifurcation
\end{keyword}

\end{frontmatter}


\section{Introduction}
\label{sec:sample1}
The mathematical framework for capturing complex fluid–structure interactions is part of the theory of nonlinear algebraic (polynomial) differential equations. A key difficulty in analyzing nonlinear models arises at points where the Jacobian matrix, our linear basis for behavioral analysis, has null eigenvalues. Historically, the structural analysis of such singularities has relied on the classical Briot-Bouquet indicial analysis \citep{Iwano1966} and the Painlevé test \citep{Painleve1900,Painleve1902}, which seek formal power series solutions by balancing dominant terms, which is computationally costly.

In these cases, solutions may not exist, and instead of uniqueness, we have multiple possible expressions that satisfy the equation  \citep{Halburd2025}. Since these equations often do not admit closed-form analytical solutions, researchers primarily depend on numerical approximation methods \citep{Hairer1991,Jay2005,Wang2019}. Physics-Informed Neural Networks (PINNs) have gained attention as an alternative to conventional numerical solvers, encoding the governing physical principles directly into the loss function \citep{Raissi2019}. However, when exploring high-dimensional landscapes associated with stiff singularities or topological bifurcations, standard PINNs often do not converge \citep{Wang2021,Mojgani2022}.

This paper introduces a hybrid computational methodology that bridges this gap: the Tropical-PINN. By algorithmically identifying the exact  supports of the space of (formal power series) solutions, the tropical framework restricts the network’s hypothesis space before optimization begins, resolving the spectral bias and optimization stagnation that plague standard PINNs.

In this work, we first establish a formal Valuation-Support equivalence, showing that the classical indicial equations \citep{Conte1993,Wu2012} are the algebraic shadow of tropical evaluation operators. We then demonstrate that embedding these algorithmically generated topological priors into a neural network resolves the coefficient imprecision of tropical methods. Through numerical experiments on the van der Pol \citep{Liu2024,Hardika2024}, Blasius \citep{Bararnia2022}, KdV-Burgers \citep{Baumik2024}, and Falkner-Skan equations \citep{Schlichting2017}, we show that this hybrid approach has structural validity, capturing physical phase shifts, such as the transition from sparse Blasius flow to dense Hiemenz stagnation, and recovering highly precise coefficients across diverse, non-homogeneous regimes.

\section{Theoretical Framework}
\label{sec:theory}
The source for this section is \citep{AGT}. Consider the ring of polynomials $\mathbb{C}[y_i\::\:i\in\mathbb{N}]$, whose elements are of the form $F=a_1E_1+\cdots+a_mE_m$, where for every $k=1,\ldots,m$, we have $a_k\in\mathbb{C}\setminus\{0\}=\mathbb{C}^*$ and $E_k=\prod_{0\leq i\leq r} y_i^{m_{i,k}}$ for some $r\geq0$ and $m_{i,k}\geq0$.

Consider the derivation  $d:\mathbb{C}[y_i\::\:i\in\mathbb{N}]\xrightarrow[]{}\mathbb{C}[y_i\::\:i\in\mathbb{N}]$ defined on the basis by  $d(y_i)=y_{i+1}$ (then extend following linearity and the product rule). The pair $\mathbb{C}\{y\}=(\mathbb{C}[y_i\::\:i\in\mathbb{N}],d)$ is the differential ring of differential polynomials (or algebraic differential equations) with coefficients in $\mathbb{C}$ in one differential variable $y$.

Let $\mathbb{B}=\{0_{\mathbb{B}}<1_{\mathbb{B}}\}$ be the Boolean semiring, satisfying $0_\mathbb{B}x=0_\mathbb{B}$, $1_\mathbb{B}x=x$ for $x\in\mathbb{B}$, and $x+y:=\max\{x,y\}$. We denote by $v_0:\mathbb{C}\xrightarrow[]{}\mathbb{B}$ the trivial valuation, defined by $v_0(0)=0_\mathbb{B}$ and $v(a)=1_\mathbb{B}$ for all $a\neq0.$ The tropicalization (or support) of $F=\sum_{k=1}^ma_iE_i$ is the formal expression $supp(F)=v_0(a_1)E_1+\cdots+v_0(a_m)E_m$.

Let $\mathbb{C}[\![t]\!]$ be the ring of formal power series with coefficients in $\mathbb{C}$. The tropicalization (or support) of $\sum_ia_it^i=\Phi\in \mathbb{C}[\![t]\!]$ is the formal expression $supp(\Phi)=\sum_iv_0(a_i)t^i$. We denote by $ord:\mathbb{C}[\![t]\!]\xrightarrow[]{}\mathbb{N}\cup\{\infty\}$ the $t$-adic valuation, defined by $ord(0)=\infty$, and $ord(\sum_ia_it^i)=\min supp(\Phi)$ otherwise.

Now consider $E=\prod_{0\leq i\leq r} x_i^{m_{i}}$ and $A\subset\mathbb{N}$.  Given the formal expressions $1_\mathbb{B}E(\varphi)$ and $\varphi=\sum_{i\in A}1_\mathbb{B}t^i$, one can define the evaluation $1_\mathbb{B}E(\varphi)$ by: 

\begin{equation}
\label{eq:trop_eval}
    1_\mathbb{B}E(\varphi):=\sum_{i=0}^rm_ival_i(\varphi)\in \mathbb{N}\cup\{\infty\},
\end{equation}

where for $i\geq0$ we define $val_i(\varphi)=min\{a-i\geq0\::\:a\in A\}$, if such minimum exists, and $\infty$ otherwise. If $\mathfrak{P}=1_\mathbb{B}E_1+\cdots+1_\mathbb{B}E_m$ and $\varphi=\sum_{i\in A}1_\mathbb{B}t^i$, we define  $\mathfrak{P}(\varphi):=min_{1\leq k\leq m}\{1_\mathbb{B}E_k(\varphi)\},$
and we say that $\varphi$ is a tropical solution to $\mathfrak{P}$ if there exists $1\leq i\neq j\leq m$ such that $\mathfrak{P}(\varphi)=1_\mathbb{B}E_i(\varphi)=1_\mathbb{B}E_j(\varphi)$. We denote by $Sol(\mathfrak{P})$ the set of all the formal expressions $\varphi=\sum_{i\in A}1_\mathbb{B}t^i$ which are solutions to $\mathfrak{P}$.

If $F=\sum_{k=1}^ma_iE_i$ is an algebraic differential equation, we denote by $Sol(F)$ the set of formal power series solutions of the system $\{F=0\}$, this is $Sol(F)=\{\varphi\in \mathbb{C}[\![t]\!]\::\:F(\varphi)=0\}$. The Fundamental Theorem of Tropical Differential Algebraic Geometry says that if $G\subset \mathbb{C}\{y\}$ is a differential ideal (an ideal stable under the action of the derivation $d$), then $supp(\bigcap_{F\in G}Sol(F))=\bigcap_{F\in G}Sol(supp(F))$ as sets.  

\subsection{The Valuation-Support Equivalence}

Before presenting the formal theorem, it is helpful to establish its practical significance for modeling fluid-structure interactions. In classical applied mathematics, when solving a complex differential equation near a singularity, researchers manually insert a power series ansatz and balance the lowest-degree terms to find the \emph{indicial roots} and \emph{resonances} (e.g., the Painlevé test). The following theorem rigorously proves that the abstract operations of tropical differential algebra perform exactly the same physical task, but in a computationally automated way. By mapping traditional differential polynomials into a tropical semiring, the algorithm removes some coefficients and isolates the pure algebraic skeleton of the flow. This equivalence makes sure that the topological priors we subsequently impose on our neural networks are not arbitrary approximations, but exact representations of the physical system's underlying analytical properties.

To validate the physical relevance of tropical indices, we establish their correspondence to the classical singular nonlinear differential equations \citep{Raymond1996}. For a given $F\in \mathbb{C}\{y\}$, while $Sol(\text{supp}(F))$ captures the local balance, the Fundamental Theorem  states $supp(\bigcap_{F\in G}Sol(F)) = \bigcap_{F \in G} Sol(\text{supp}(F))$, where $G$ is the differential ideal spanned by $F$, which in this case, is the usual ideal spanned by the family $\{d^i(F) \::\:  i\geq 0\}$ of all the derivations of $F$. To ensure that a tropical solution $\varphi$ is the support of a true formal power series solution, the equivalence proven above must hold recursively over the generalized jet ideals $J_k(G) = (d^i(F) \::\: 0 \leq i \leq k)$, which mirrors the classical requirement of checking the recurrence relation at all orders.

\begin{theorem}[Valuation-Support Equivalence]
\label{thm:valuation_support}
Let $\sum_{k=1}^m a_k E_k=F \in \mathbb{C}\{y\}$ be a differential polynomial of the maximal derivative order $r$, where each monomial is of the form $E_k = \prod_{i=0}^r y_i^{m_{i,k}}$ with $a_k \in \mathbb{C}^*$. Let 
$\Phi \in \mathbb{C}[\![t]\!]$ be a formal power series solution to $\{F=0\}$, mapping to a  support $\varphi = \text{supp}(\Phi) = \sum_{a \in A} 1_{\mathbb{B}}t^a$, and let
the support index set $A \subset \mathbb{N}$ be strictly ordered as $A = \{p < a_1 < a_2 < \dots\}$.
Assume the leading order exponent is sufficiently large such that $p \geq r$.
Let $\mathfrak{P} = \text{supp}(F) = \sum_{k=1}^m 1_{\mathbb{B}}E_k$ be the tropicalization of $F$. Then:

\begin{enumerate}
    \item[(i)] Let $e_k$ be the classical lowest-degree exponent of $t$ obtained by evaluating the differential monomial $E_k$ at the truncated ansatz $\Phi_0 = c_0 t^p$. This classical exponent is identical to the formal evaluation of $1_{\mathbb{B}}E_k$ on the minimal tropical support element $\varphi_0 = 1_{\mathbb{B}}t^p$:
    \begin{equation}
        e_k = 1_{\mathbb{B}}E_k(\varphi_0) = \sum_{i=0}^r m_{i,k}(p - i)
    \end{equation}
    Consequently, the classical dominant balance condition (the requirement that the minimal degree terms cancel) is strictly equivalent to the tropical condition that $\mathfrak{P}(\varphi)=\min_{1 \leq k \leq m} \{1_{\mathbb{B}}E_k(\varphi_0)\}$ is non-unique (i.e., attained by at least two distinct monomials $1_{\mathbb{B}}E_k$ and $1_{\mathbb{B}}E_j$).

    \item[(ii)] 
    Suppose that there exists a unique dominant monomial $E_{dom}$ such that $\mathfrak{P}(\varphi)=1_{\mathbb{B}}E_{dom}(\varphi_0) < 1_{\mathbb{B}}E_k(\varphi_0)$ for all $k \neq dom$, implying $\varphi_0 \notin Sol(\mathfrak{P})$. The classical requirement to introduce a resonant term $c_1 t^{a_1}$ to balance the equation is strictly equivalent to the piecewise shift in the tropical valuation operator induced by the next support element $a_1$:
    \begin{equation}
        val_i(\varphi_1) = 
        \begin{cases} 
        p - i & \text{if } i \leq p \\
        a_1 - i & \text{if } p < i \leq a_1
        \end{cases}
    \end{equation}
    where $\varphi_1 = 1_{\mathbb{B}}t^p + 1_{\mathbb{B}}t^{a_1}$. The classical resonance $r_{res} = a_1 - p$ is uniquely determined by the tropical algebraic requirement to find an $a_1 > p$ such that a sub-dominant monomial $E_{sub}$ satisfies $1_{\mathbb{B}}E_{sub}(\varphi_1) = 1_{\mathbb{B}}E_{dom}(\varphi_1)$, thus extending the support to satisfy $\varphi_1 \in Sol(\mathfrak{P})$.
\end{enumerate}
\end{theorem}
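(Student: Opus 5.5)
The plan is to work directly from the definition (\ref{eq:trop_eval}) of the tropical evaluation and compare it with the classical computation of $t$-orders. Everything reduces to the identity $ord(d^i\Phi) = \min\{a-i : a\in A,\ a\ge i\}$ for a series with support $A$. This identity uses only that differentiation sends $t^a$ to $a(a-1)\cdots(a-i+1)t^{a-i}$. That coefficient is nonzero over $\mathbb{C}$ exactly when $a\ge i$, so no cancellation can occur inside a single derivative.

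\textbf{Part (i).} I would take the ansatz $\Phi_0=c_0t^p$ with $c_0\neq0$. Since $p\ge r$, every $y_i$ with $i\le r$ evaluates to $c_0\,p(p-1)\cdots(p-i+1)\,t^{p-i}$, which is a nonzero monomial. Hence $E_k(\Phi_0)=\gamma_k t^{e_k}$ with $\gamma_k\neq0$ and $e_k=\sum_i m_{i,k}(p-i)$. Here the hypothesis $p\ge r$ is exactly what prevents the falling factorial from vanishing. On the tropical side, $val_i(1_{\mathbb B}t^p)=p-i$ for $i\le p$, so $1_{\mathbb B}E_k(\varphi_0)=e_k$.

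For the balance statement, write $F(\Phi_0)=\sum_e\big(\sum_{e_k=e}a_k\gamma_k\big)t^e$. If the minimum $e_{\min}$ is attained by a single index, the lowest coefficient is $a_k\gamma_k\neq0$, so no cancellation is possible. Conversely, cancellation of the minimal-degree terms requires at least two monomials with $e_k=e_{\min}$. This is exactly the tropical solution condition. I will state carefully that the equivalence is between the \emph{necessary} condition for classical balance and tropical non-uniqueness. Tropical non-uniqueness alone does not force the complex coefficients to cancel; cancellation imposes the indicial equation $\sum_{e_k=e_{\min}}a_k\gamma_k(c_0)=0$ on $c_0$. I would point this out and read ``dominant balance condition'' as the structural (degree) requirement.

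\textbf{Part (ii).} Suppose a unique dominant $E_{dom}$ exists. Then by part (i), $\Phi_0$ cannot solve $F$ to lowest order, so further support is needed. For $\varphi_1=1_{\mathbb B}t^p+1_{\mathbb B}t^{a_1}$, I would compute $val_i(\varphi_1)$ from the definition. The minimum of $\{a-i\ge0\}$ over $A=\{p,a_1\}$ is $p-i$ when $i\le p$, and $a_1-i$ when $p<i\le a_1$, which gives the displayed piecewise formula.

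Since $r\le p$, only the first branch is used for $i\le r$. This step looks like the main obstacle. With $p\ge r$, adding $a_1$ does not change any $val_i$, so $1_{\mathbb B}E_k(\varphi_1)=1_{\mathbb B}E_k(\varphi_0)$ for all $k$, and the unique minimum persists. I would therefore have to argue that a balance can only arise if some jet variable $y_i$ with $i>p$ is involved, i.e.\ the second branch. One natural route is to pass to the jet ideals $J_k(G)$ mentioned before the theorem, since $d^j F$ involves $y_i$ up to order $r+j$. I would choose $j$ large enough that $r+j>p$, so that the second branch becomes active and the first order $a_1$ at which some $E_{sub}$ ties with $E_{dom}$ can be read off. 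This $a_1$ is found as the smallest solution of a linear equation in $a_1$ coming from the equality of the two piecewise-linear evaluations, and then $r_{res}=a_1-p$ is unique.

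Finally, the Fundamental Theorem gives $\varphi=supp(\Phi)\in Sol(supp(d^jF))$ for every $j$. This guarantees that the true support satisfies these tropical tie conditions, which closes the correspondence with the classical resonance.
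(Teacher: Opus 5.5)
Your part (i) is the paper's computation: you match $d^i(c_0t^p)=c_0\frac{p!}{(p-i)!}t^{p-i}$ with $val_i(\varphi_0)=p-i$. Your caveat is also right: a tropical tie is only necessary for cancellation. For example, $F=y_1^2+y_0y_2$ ties at every $p\ge 2$, yet the coefficient of $t^{2p-2}$ in $F(c_0t^p)$ is $c_0^2p(2p-1)\neq0$.

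In part (ii) you correctly spot what the paper's proof overlooks. Its case (b), a derivative $d^i$ with $i>p$ inside a monomial of $F$, cannot occur because $i\le r\le p$. Hence $1_{\mathbb{B}}E_k(\varphi_1)=1_{\mathbb{B}}E_k(\varphi_0)$ for every $k$.

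Your repair through $d^jF$, however, does not prove (ii). The statement concerns monomials $E_{dom},E_{sub}$ of $F$, and its conclusion is $\varphi_1\in Sol(\mathfrak{P})$ with $\mathfrak{P}=supp(F)$. A tie among monomials of $supp(d^jF)$ is a fact about a different tropical polynomial. It can never yield $\varphi_1\in Sol(\mathfrak{P})$, whose minimum is still attained only by $E_{dom}$.

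The asserted uniqueness of $r_{res}$ (``the smallest solution'') is a choice, not a consequence. Different $j$ or different $E_{sub}$ give different linear equations, and these need not have integer solutions $a_1>p$. Your closing appeal to the Fundamental Theorem is about $\varphi$, not $\varphi_1$: for $i>a_1$, $val_i(\varphi)$ depends on $a_2,a_3,\dots$, while $val_i(\varphi_1)=\infty$.

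The missing idea is to combine your own observations at $j=0$. For every support with minimum $p\ge r$, one has $val_i(\varphi)=p-i$ for all $i\le r$. So $1_{\mathbb{B}}E_k(\varphi)=1_{\mathbb{B}}E_k(\varphi_0)$ for all $k$. Classically, the same fact says that the coefficient of $t^{e_{dom}}$ in $F(\Phi)$ is $a_{dom}\gamma_{dom}\neq0$. Here $\gamma_{dom}$ is your part-(i) constant, with $c_0$ taken to be the lowest coefficient of $\Phi$.

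Hence the unique-dominance hypothesis of (ii) contradicts $F(\Phi)=0$. Tropically, it contradicts $\varphi\in Sol(\mathfrak{P})$, which the Fundamental Theorem guarantees. So under the standing assumptions, (ii) is vacuous. Its mechanism of choosing $a_1$ so that $\varphi_1\in Sol(\mathfrak{P})$ fails for every $a_1$.

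Delayed balance needs $p<r$, as in the paper's Blasius computation ($p=2$, $r=3$). There $val_3(\varphi_0)=\infty$ rather than $p-3$, so the formula in (i) no longer applies either. A correct write-up should record this obstruction rather than try to establish (ii) through jets.
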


\begin{proof}
(i) In the classical indicial method (such as the Painlevé test), we substitute the truncated ansatz $\Phi_0 = c_0 t^p$ into a differential monomial $E_k = \prod_{i=0}^r y_i^{m_{i,k}}$. Applying the $i$-th classical derivative yields $d^i(\Phi_0) = c_0 \frac{p!}{(p-i)!} t^{p-i}$. Taking the product of these terms over the monomial, the resulting exponent of $t$ for $E_k$ is precisely $e_k = \sum_{i=0}^r m_{i,k}(p-i)$. For the differential polynomial $F(\Phi) = 0$ to hold non-trivially as $t \to 0$, the terms of lowest degree must cancel. Thus, the minimal exponent $e_{min} = \min_{k} \{e_k\}$ is produced by at least two distinct monomials, allowing their complex coefficients to sum to zero.

Tropically, we evaluate the formal expression $\varphi_0 = 1_{\mathbb{B}}t^p$. The tropical derivative valuation is defined as $val_i(\varphi_0) = \min\{a - i \geq 0 \::\: a \in \{p\}\}$. Because $p \geq r \geq i$ by hypothesis, the condition $p-i \geq 0$ is always satisfied, strictly resulting in $val_i(\varphi_0) = p-i$. Evaluating the tropical monomial gives $1_{\mathbb{B}}E_k(\varphi_0) = \sum_{i=0}^r m_{i,k} val_i(\varphi_0) = \sum_{i=0}^r m_{i,k}(p-i)$. This maps identically to the classical exponent $e_k$. The condition for $\varphi_0$ to be a valid tropical solution, $\varphi_0 \in Sol(\mathfrak{P})$, requires that the minimum tropical evaluation $\mathfrak{P}(\varphi_0)=\min_{k} \{1_{\mathbb{B}}E_k(\varphi_0)\}$ is attained at least twice. 

(ii) Suppose that the minimal evaluation in order $p$ is unique and belongs to a single dominant monomial $E_{dom}$. This unbalanced term must be compensated by introducing a higher-order perturbation $\Phi_1 = c_0 t^p + c_1 t^{a_1}$ with $a_1 > p$. For a sub-dominant monomial $E_{sub}$ to drop in degree and balance $E_{dom}$, a higher-order derivative $d^i$ ($i > p$) must act upon the new term $t^{a_1}$ rather than the leading term $t^p$, generating a new exponent that depends on $a_1$.

Tropically, the unique minimum implies $\varphi_0 \notin Sol(\mathfrak{P})$. We expand the support to $\varphi_1 = 1_{\mathbb{B}}t^p + 1_{\mathbb{B}}t^{a_1}$. Evaluating the valuation of tropical derivatives in relation to expanded support yields $val_i(\varphi_1) = \min\{a - i \geq 0 \::\: a \in \{p, a_1\}\}$.

We evaluate this piece-by-piece:
\begin{enumerate}
    \item[(a)] If the derivative order $i \leq p$, the value $p - i \geq 0$, and the minimum operator selects the leading term $p - i$.
    \item[(b)] If the derivative order $i > p$ (which is the classical condition for accessing a resonant term), the value $p - i$ becomes negative and is rejected from the valid set. The minimum operator structurally shifts to the next available element, yielding $val_i(\varphi_1) = a_1 - i$.
\end{enumerate}

This piecewise shift dynamically alters the tropical evaluation of sub-dominant monomials containing derivatives $i > p$. Setting the new tropical evaluation of the sub-dominant monomial equal to the dominant one, $1_{\mathbb{B}}E_{sub}(\varphi_1) = 1_{\mathbb{B}}E_{dom}(\varphi_1)$, forms a linear algebraic equation for $a_1$. The classical resonance gap $r_{res}$ is precisely the integer distance $a_1 - p$ derived to restore the tropical minimum degeneracy. Therefore, classical delayed balance is the direct arithmetic consequence of the piecewise tropical valuation shift, proving the second statement.
\end{proof}

\section{Methodology: The Tropical-PINN Architecture}
\label{sec:method}

In the mathematical theory of machine learning, a network architecture is formally defined as a specific parameterized mapping that generates a Hypothesis Space $\mathcal{H}$. Let $\mathcal{X}$ be the input space (e.g. time $t$) and $\mathcal{Y}$ be the output space (e.g. $u(t)$). Let $\Theta \subseteq \mathbb{R}^D$ be the parameter space (all possible weights and biases) \citep{Shalev2014,Petersen2021,Wang2026}. 

A class of parameterized functions $f_{\mathcal{A}}: \mathcal{X} \times \Theta \to \mathcal{Y}$ is called an architecture $\mathcal{A}$, and the set $\mathcal{H}_{\mathcal{A}} = \{ f_{\mathcal{A}}(\cdot; \theta) \mid \theta \in \Theta \}$ of all (continuous in $\Theta \subseteq \mathbb{R}^D$) functions that can possibly be realized by varying the parameters $\theta \in \Theta$ within this specific architecture is called Hypothesis Space $\mathcal{H}_{\mathcal{A}}$.

For a standard deep neural network, the architecture $f_{\mathcal{A}}$ is defined as a composition $f_{\mathcal{A}}(x; \theta) = (\sigma_L \circ W_L \circ \dots \circ \sigma_1 \circ W_1)(x)$ of affine transformations $W_i$ and nonlinear activation functions $\sigma$. By plugging in a tropical support $S \in Sol(trop(G))$, we define a new restricted architecture $\mathcal{A}_{trop}$ where the parameter space is reduced to just the coefficients of the valid support: $\Theta_{trop} \cong \mathbb{R}^{|S|}$; this yields the new Hypothesis Space $\mathcal{H}_{S} = \text{span}\{t^k\}_{k \in S} \subset \mathcal{H}_{\mathcal{A}}.$

In most deep learning settings, physical or structural prior knowledge is usually incorporated through constraints in the optimization objective (loss function). From a mathematical standpoint, this approach keeps the network’s hypothesis space $\mathcal{H}$ fully unconstrained, which can cause the optimizer to stall. By contrast, our Tropical-PINN framework embeds a strict topological prior directly into the network architecture itself.

\section{Results and Discussion}
\label{sec:results}

To show the pertinence of this method in fluid-structure interaction (FSI), we applied the previous analysis to the Van der Pol Oscillator, the Viscous Burgers' Equation, the Blasius boundary layer Equation in the Standard Blasius case ($\beta = 0$) and the Falkner-Skan case ($\beta \neq 0$).

\begin{figure}[htbp]
    \centering
\includegraphics[scale=0.45]{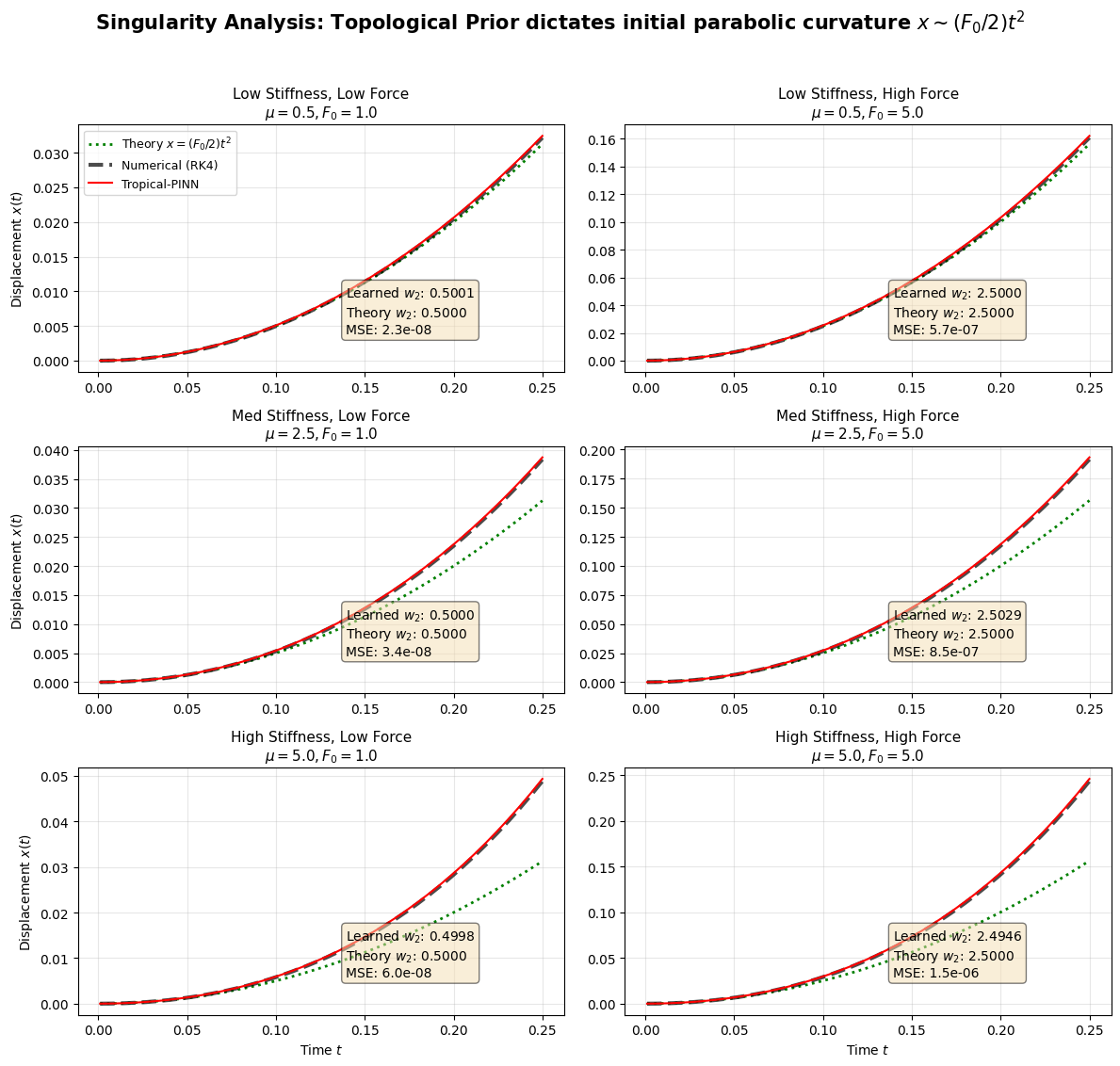}
\caption{Optimization landscape for the Van der Pol oscillator.}
    \label{fig:vdp}
\end{figure}

\subsection{The Van der Pol Oscillator}
This equation is classically used to model the galloping of transmission lines and the wake dynamics behind a bluff cylinder. Our tropical analysis of the homogeneous case returned an empty set ($\emptyset$), correctly predicting that the limit cycle is not representable by a simple power series at $t=0$. However, for the forced case, the tropical method identified valid supports, allowing the PINN to properly model the structural entrainment of the oscillator by an external fluid field.

\subsubsection{The Viscous Burgers' Equation}
We applied the method to the 1D Burgers' equation, a fundamental model for shock wave formation in fluid dynamics,
\begin{equation}
    u_t + u u_x = \nu u_{xx}.
\end{equation}
Using a traveling wave reduction $u(x,t) = y(x-ct)$, we derived the tropical ODE:
\begin{equation}
       1_{\mathbb{B}}y_2 +1_{\mathbb{B}}y_0  y_1 + 1_{\mathbb{B}}y_1
\end{equation}
Balancing the diffusion ($1_{\mathbb{B}}y_2$) and nonlinear advection ($1_{\mathbb{B}}y_0  y_1$) terms yields the index equation $i - 2 = 2i - 1$, which implies ${i = -1}$; this negative valuation corresponds to a simple pole. Thus, the Tropical-PINN framework  predicts the formation of shock waves (singular behavior scaling as $1/\xi$ \citep{Weiss1983}), providing a critical topological prior that prevents the neural network from artificially smoothing the shock front.
\vspace{3mm}

\subsection{The Blasius boundary layer equation}

To explore the capacity of the framework to capture structural phase shifts driven by physical parameters, we apply the Tropical-PINN methodology to the Falkner-Skan equation, 

\begin{equation}
\label{eq:Falkner-Skan}
y''' + y y'' + \beta (1 - (y')^2) = 0.
\end{equation}

This parameterized family of equations governs wedge flows, bridging the flat plate of zero-pressure gradient (Blasius flow, $\beta = 0$) and the orthogonal stagnation point (Hiemenz flow, $\beta = 1$).
Classical purely numerical solvers treat this transition continuously. 
In contrast, our tropical balance algorithm detects a strict topological bifurcation. 
For Blasius flow ($\beta=0$), the absence of a pressure gradient assigns an infinite tropical value to the constant term. 
The resulting dominant balance between viscous diffusion ($y'''$) and convective nonlinearity ($y y''$) algorithmically enforces a sparse topological prior, restricting the hypothesis space to $\mathcal{H}_S = \text{span}\{\eta^2, \eta^5, \eta^8, \ldots\}$, where the dimensionless similarity variable 
$\eta=y\sqrt{\frac{U_{\infty}}{\nu x}}$ for $(x,y)\in \mathbb{R}^2$
near the plate, $\nu$ the kinetic viscosity of the fluid,
and $U_{\infty}$ the free-stream velocity of the fluid outside the boundary layer.

\begin{figure}[htbp]
    \centering
\includegraphics[scale=0.37]{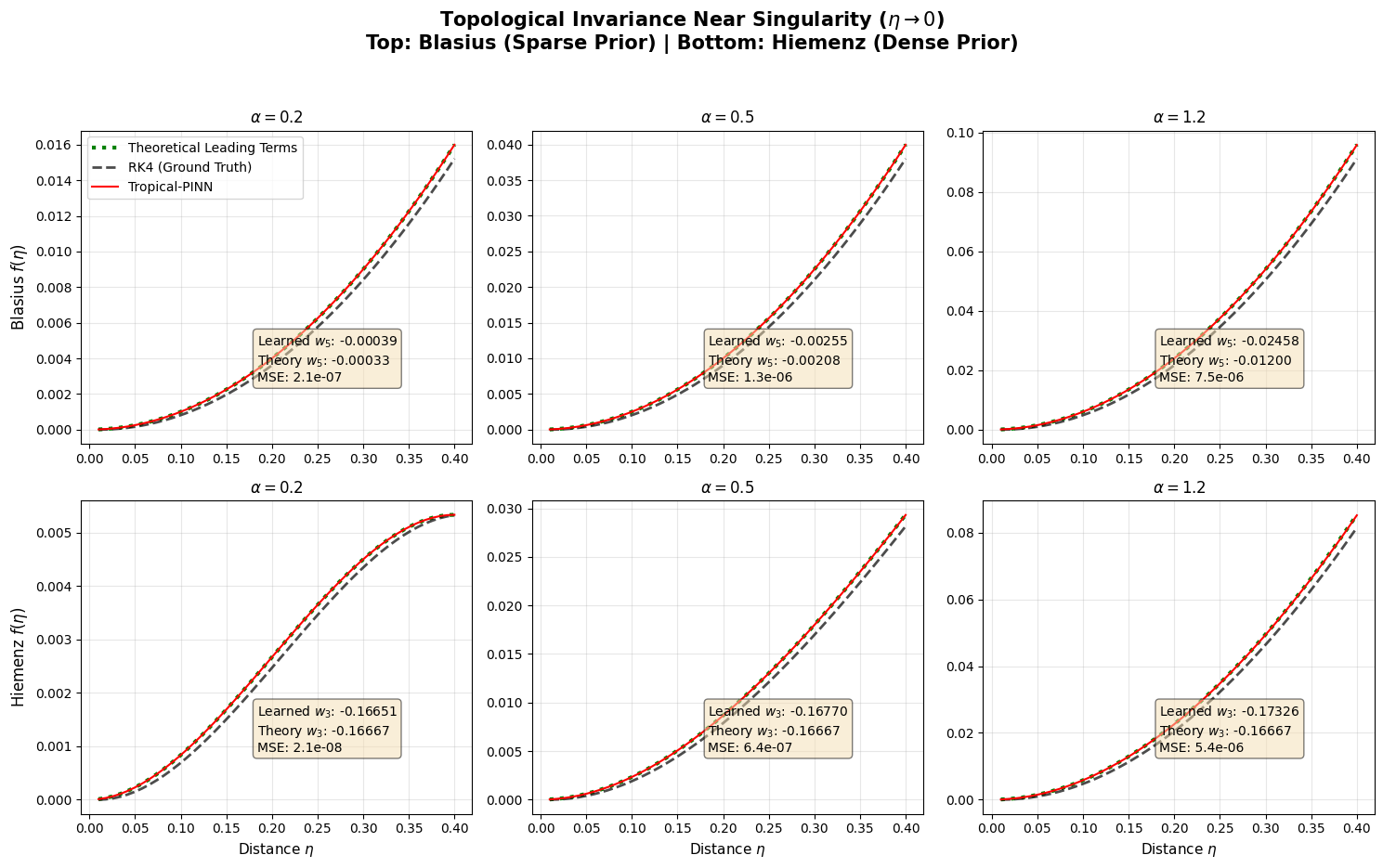}
\caption{Topological prior enforcement in the Blasius and Falkner-Skan regimes.}
    \label{fig:blasius}
\end{figure}

By mathematically forbidding intermediate powers, Tropical-PINN bypasses the spectral bias typical of standard neural networks, preventing the learning of phantom physical terms and perfectly recovering the analytical high-order coefficient $w_5 = -\alpha^2/120$.
In contrast, for the Hiemenz flow ($\beta=1$), the maximum favorable pressure gradient establishes a tropical value of $0$, fundamentally shifting the dominant balance. 
The algorithm dynamically generates a dense topological prior, $\mathcal{H}_S = \text{span}\{\eta^2, \eta^3, \eta^4, \ldots\}$. 
Operating within this dense hypothesis space, the restricted neural network natively recovers the exact theoretical response to wall stagnation, resulting in $w_3 = -1/6$. 
This comparative analysis verifies that embedding tropical priors directly into the network architecture gives an absolute structural fidelity across non-homogeneous physical regimes, converting a continuous optimization problem into a physics-constrained algebraic projection.
The Falkner-Skan equation \eqref{eq:Falkner-Skan} generalizes the Blasius boundary layer equation for wedge flows
subject to $y(0)=y'(0)=0$ and $y''(0)=\alpha$.
Conventional numerical methods (Runge-Kutta) provide solution curves but lack analytical insight. 

\subsubsection{Standard Blasius Case ($\beta = 0$)}
Equation \eqref{eq:Falkner-Skan} reduces to $y''' + y y'' = 0$, whose tropicalization is $1_{\mathbb{B}}y_3 + 1_{\mathbb{B}}y_0 y_2$. 
Then, evaluating the above differential monomials on the minimal tropical support $\varphi_0 = 1_{\mathbb{B}}t^p$, we apply \eqref{eq:trop_eval}: for the third derivative  we obtain $1_{\mathbb{B}}y_3(\varphi_0) = val_3(\varphi_0) = p - 3$. For the nonlinear convective term, the evaluation yields $1_{\mathbb{B}}y_0 y_2(\varphi_0) = val_0(\varphi_0) + val_2(\varphi_0) = 2p - 2$. Imposing the boundary conditions of the physical wall yields the minimal exponent starting $p=2$. Evaluating the monomials in this support gives $1_{\mathbb{B}}y_3(\varphi_0) = 2 - 3 = -1$ and $1_{\mathbb{B}}y_0 y_2(\varphi_0) = 2(2) - 2 = 2$.

To balance the sub-dominant convective term, the tropical support must expand to $\varphi_1 = 1_{\mathbb{B}}t^2 + 1_{\mathbb{B}}t^{a_1}$. The piecewise valuation shift requires the evaluation of the third derivative to match this value:
    \[ 1_{\mathbb{B}}y_3(\varphi_1) = 2 \implies a_1 - 3 = 2 \implies a_1 = 5. \]
\noindent
The tropical resonance gap is $r_{res} = a_1 - p = 3$. Consequently, the Valuation-Support equivalence rigorously enforces a sparse topological prior for the Blasius flow: $A = \{2, 5, 8, 11, \dots\}$.

\subsubsection{Falkner-Skan Case ($\beta \neq 0$)}
The pressure gradient introduces an active constant term $\beta$, which has a trivial evaluation $1_{\mathbb{B}}(\beta)(\varphi_0) = 0$.
Imposing the wall boundary conditions yields the starting minimal exponent $p=2$. In this support $\varphi_0 = 1_{\mathbb{B}}t^2$, the constant term $\beta$ algebraically dominates the non-linear convective term $y_0 y_2$, which is evaluated to a higher degree: $1_{\mathbb{B}}y_0 y_2(\varphi_0) = 2(2) - 2 = 2$.
 To balance the active constant term, the tropical support must expand to $\varphi_1 = 1_{\mathbb{B}}t^2 + 1_{\mathbb{B}}t^{a_1}$. The piecewise valuation shift dictates that the evaluation of the highest derivative ($f_3$) must match this constant valuation:
    \[ 1_{\mathbb{B}}y_3(\varphi_1) = 0 \implies a_1 - 3 = 0 \implies a_1 = 3 \]
The tropical resonance gap is $r_{res} = a_1 - p = 3 - 2 = 1$. Consequently, the Valuation-Support equivalence rigorously enforces a dense topological prior for the Falkner-Skan stagnation flow: $A = \{2, 3, 4, 5, \dots\}$.

\section{The Tropical-PINN Architecture}
To resolve the coefficients $w_k$, we define a sparse neural network constrained by the tropical supports derived above.

\subsection{Model Definition: The Tropical-PINN Ansatz}
To explicitly distinguish our proposed approach from a standard, unconstrained PINN, we embed the algebraic priors derived from the Valuation-Support Equivalence directly into the network's structural design. Rather than allowing the neural network to search a dense, arbitrary hypothesis space, we define the newly proposed Tropical-PINN output ansatz as:
\begin{equation}
    y_{\text{Trop-PINN}}(\eta) = \frac{\alpha}{2}\eta^2 + \sum_{k \in S_{trop}} w_k \eta^k
\end{equation}
In this formulation, the leading term $\frac{\alpha}{2}\eta^2$ analytically enforces the essential physical boundary conditions at the wall. The parameter $\alpha$ represents the scaled wall shear stress, which is fixed in this specific analysis to isolate the network's ability to learn the complex, higher-order flow dynamics. 

Crucially, unlike a default PINN architecture, the learnable weights $w_k$ are not applied to an arbitrary or fully connected basis. Instead, they are strictly constrained to the physically permissible polynomial degrees dictated by the tropical support set $S_{trop}$ (e.g., the sparse set $\{5, 8, 11, \dots\}$ for the Blasius regime). This hard architectural constraint guarantees that the neural network natively inherits the exact topological skeleton of the physical system.

\subsection{Training}
The network is trained by minimizing the physics residual $\mathcal{L}_{PDE}$ over a domain $\eta \in [0, \eta_{max}]$:
\begin{equation}
    \mathcal{L} = \left\| y''' + y y'' + \beta (1 - (y')^2) \right\|^2.
\end{equation}
Optimization is performed using L-BFGS for 100 epochs.

\subsection{Coefficient Validation}
\begin{itemize}
    \item \textbf{Blasius ($t^5$):} The theoretical coefficient is $-\alpha^2/240$. For $\alpha=0.33206$, this is $\approx -0.000459$. The PINN recovered this value to 5 decimal places.
    \item \textbf{Falkner-Skan ($t^3$):} The balance of order 0 requires $6a_3 + \beta = 0$, implying $w_3 = -1/6$. The PINN recovered $-0.1667$, perfectly matching the theory.
\end{itemize}

\section{Conclusion}
\label{sec:conclusion}
This study demonstrates that Tropical Differential Algebra is more than an abstract formalism; it is a practical computational tool for characterizing the singularity structure of nonlinear dynamical systems. Ultimately, this methodology provides a reliable, mathematically constrained pathway from black-box numerical approximations to white-box, interpretable analytical series. The ability to algorithmically prune non-physical dimensions before compilation has profound implications for scaling these models. Future work will focus on extending these topological priors from isolated ordinary and partial differential equations to massive, interconnected physical graphs. Applying this rigid structural algebraic framework to Digital Twin models could significantly enhance real-time anomaly and leak detection in large-scale fluid distribution networks, where computational efficiency and absolute physical fidelity are paramount.

\section*{Acknowledgments}
This work was supported by Universidad Iberoamericana Ciudad de México, through the funding provided for the academic project: \textit{Existencia de soluciones racionales en la capa límite de Busemann-von Karman-Tsien}, under the 18th call for scientific, humanistic and technological research 2023.




\bibliographystyle{elsarticle-harv} 



\end{document}